\documentclass{amsart}

\usepackage{ amsthm } 
\usepackage{ array }
\usepackage{ booktabs }
\usepackage{ url }

\theoremstyle{definition}

\newtheorem{definition}{Definition}[section]
\newtheorem{notation}[definition]{Notation}

\newtheorem{remark}[definition]{Remark}
\newtheorem{algorithm}[definition]{Algorithm}

\theoremstyle{plain}

\newtheorem{lemma}[definition]{Lemma}
\newtheorem{proposition}[definition]{Proposition}
\newtheorem{theorem}[definition]{Theorem}

\allowdisplaybreaks

\begin{document}

\title[Deformations of the diassociative and dendriform operads]
{One-parameter deformations of the diassociative and dendriform operads}

\author{Murray R. Bremner}

\address{Department of Mathematics and Statistics, University of Saskatchewan, Canada}

\email{bremner@math.usask.ca}

\subjclass[2010]{Primary 17A30. Secondary 15A21, 15A54, 16S80, 17A50, 17B63, 18D50.}

\keywords{Diassociative algebras, dendriform algebras, Hermite normal form, LLL algorithm, polarization,
deformation, Poisson algebras, algebraic operads, Koszul duality.}

\thanks{The author was supported by a Discovery Grant from NSERC, the Natural Sciences 
and Engineering Research Council of Canada.}

\begin{abstract}
Livernet and Loday constructed a polarization of the nonsymmetric associative operad $\mathcal{A}$ 
with one operation into a symmetric operad $\mathcal{SA}$ with two operations (the Lie bracket and 
Jordan product), and defined a one-parameter deformation of $\mathcal{SA}$ which includes Poisson 
algebras.
We combine this with the dendriform splitting of an associative operation into the sum of two nonassociative 
operations, and use Koszul duality for quadratic operads, to construct one-parameter deformations 
of the nonsymmetric dendriform and diassociative operads into the category of symmetric operads.
\end{abstract}

\maketitle


\section{The dendriform and diassociative operads}

We write $\mathcal{O}_1$ for the free nonsymmetric operad generated by one binary operation $(a,b) \mapsto ab$, and
$\mathcal{O}_2$ for the free nonsymmetric operad generated by left and right binary operations $(a,b) \mapsto a \prec b$ 
and $(a,b) \mapsto a \succ b$.

\begin{definition} \label{defsplit}
The operad morphism $\mathsf{split}\colon \mathcal{O}_1 \to \mathcal{O}_2$,
which is injective but not surjective, is determined by its value on the generator:
  \[
  \mathsf{split}( ab ) = a \prec b + a \succ b.
  \]
\end{definition}

\begin{definition}
The (nonsymmetric) \textbf{associative operad} $\mathcal{A}$ is the quotient of 
$\mathcal{O}_1$ by the ideal generated by the associator $\alpha = (ab)c - a(bc)$.
Since $\alpha = 0$ is a quadratic relation (every term contains two operations), $\mathcal{A}$ is a
quadratic operad.
\end{definition}

\begin{lemma} \label{splitassociator}
The image of the associator under the splitting morphism is:
  \[
  \begin{array}{r}
  \mathsf{split}( \alpha )
  =
    ( a \prec b ) \prec c 
  \;+\; ( a \prec b ) \succ c
  \;+\; ( a \succ b ) \prec c 
  \;+\; ( a \succ b ) \succ c
  \\
  \;-\; a \prec ( b \prec c )
  \;-\; a \prec ( b \succ c )
  \;-\; a \succ ( b \prec c )
  \;-\; a \succ ( b \succ c ).
  \end{array}
  \]
\end{lemma}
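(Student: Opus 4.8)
The plan is to invoke nothing beyond the defining property of $\mathsf{split}$, namely that it is a morphism of nonsymmetric operads and hence compatible with operadic composition. Write $\mu$ for the single generator of $\mathcal{O}_1$, so that $\mu = ab$, and write $\mu_\prec = a \prec b$, $\mu_\succ = a \succ b$ for the two generators of $\mathcal{O}_2$; Definition~\ref{defsplit} says $\mathsf{split}(\mu) = \mu_\prec + \mu_\succ$. In the free nonsymmetric operad $\mathcal{O}_1$ the two ternary monomials of the associator are the two partial compositions of $\mu$ with itself: $(ab)c = \mu \circ_1 \mu$ (substitute $\mu$ into the first argument of $\mu$) and $a(bc) = \mu \circ_2 \mu$ (substitute into the second argument), so $\alpha = \mu \circ_1 \mu - \mu \circ_2 \mu$.

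First I would push $\mathsf{split}$ through the two compositions using functoriality, obtaining
\[
\mathsf{split}(\alpha) = \bigl(\mu_\prec + \mu_\succ\bigr) \circ_1 \bigl(\mu_\prec + \mu_\succ\bigr) \;-\; \bigl(\mu_\prec + \mu_\succ\bigr) \circ_2 \bigl(\mu_\prec + \mu_\succ\bigr).
\]
Next I would expand each product by bilinearity of the partial composition maps $\circ_i$ over the ground field; each of the two products becomes a sum of four ternary monomials. The $\circ_1$ term yields $\mu_\prec \circ_1 \mu_\prec$, $\mu_\prec \circ_1 \mu_\succ$, $\mu_\succ \circ_1 \mu_\prec$, $\mu_\succ \circ_1 \mu_\succ$, which in infix notation are precisely $(a \prec b) \prec c$, $(a \prec b) \succ c$, $(a \succ b) \prec c$, $(a \succ b) \succ c$; the $\circ_2$ term yields, with an overall minus sign, $a \prec (b \prec c)$, $a \prec (b \succ c)$, $a \succ (b \prec c)$, $a \succ (b \succ c)$. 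Collecting these eight monomials gives exactly the asserted formula.

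The argument is completely routine; there is no real obstacle once functoriality of $\mathsf{split}$ is used, since the statement amounts to expanding a product of two binomials in each of the two argument slots. The only point requiring care is the dictionary between operadic-composition notation and infix notation for the two operations, i.e.\ checking that in $\mu_x \circ_i \mu_y$ the factor $\mu_y$ is inserted into the intended argument, and observing that the eight resulting monomials are pairwise distinct, so that nothing collapses or cancels.
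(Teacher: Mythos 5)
Your proof is correct and is exactly the routine expansion the paper has in mind; the paper states Lemma \ref{splitassociator} without any proof, treating it as immediate from Definition \ref{defsplit} and the fact that $\mathsf{split}$ is an operad morphism. (One cosmetic quibble: in your term-by-term dictionary the middle two identifications are transposed --- $\mu_\prec \circ_1 \mu_\succ = (a \succ b) \prec c$ and $\mu_\succ \circ_1 \mu_\prec = (a \prec b) \succ c$ --- but since all four monomials occur in the sum with the same sign, this does not affect the conclusion.)
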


\begin{definition} \label{dendsplit}
We rearrange $\mathsf{split}( \alpha )$ into the \textbf{dendriform splitting} of $\alpha$:
  \[
  \begin{array}{r@{\,}l}
  &
  [ \;
  ( a \prec b ) \prec c \;-\; a \prec ( b \prec c ) \;-\; a \prec ( b \succ c ) 
  \; ]
  \\
  +
  &
  [ \;
  ( a \prec b ) \succ c \;+\; ( a \succ b ) \succ c \;-\; a \succ ( b \succ c )
  \; ]
  \\
  +
  &
  [ \;
  ( a \succ b ) \prec c \;-\; a \succ ( b \prec c )
  \; ].
  \end{array}
  \]
\end{definition}

\begin{definition} \label{defdend}
The three relations in brackets in Definition \ref{dendsplit} define the (quadratic nonsymmetric) 
\textbf{dendriform operad} $\mathsf{Dend}$; see \cite{Aguiar2000,Loday2001,LodayRonco1998}:
  \[
  \begin{array}{l}
  ( a \prec b ) \prec c - a \prec ( b \prec c ) - a \prec ( b \succ c ) = 0,
  \\
  ( a \prec b ) \succ c + ( a \succ b ) \succ c - a \succ ( b \succ c ) = 0,
  \\
  ( a \succ b ) \prec c - a \succ ( b \prec c ) = 0.
  \end{array}
  \]
The (quadratic nonsymmetric) \textbf{diassociative operad} $\mathsf{Dias}$ is the Koszul dual of $\mathsf{Dend}$; 
see \cite{Loday1995}.
\end{definition}

\begin{lemma} \label{defdias}
\cite{Loday1995,Loday2001}
The diassociative operad $\mathsf{Dias}$ is defined by these relations:
  \[
  \begin{array}{l}
  ( a \prec b ) \prec c = a \prec ( b \prec c ) = a \prec ( b \succ c ),
  \\
  ( a \prec b ) \succ c = ( a \succ b ) \succ c = a \succ ( b \succ c ),
  \\
  ( a \succ b ) \prec c = a \succ ( b \prec c ).
  \end{array}
  \]
\end{lemma}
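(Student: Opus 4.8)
The plan is to compute the Koszul dual $\mathsf{Dend}^{!}$ explicitly from the three dendriform relations of Definition \ref{defdend} and check that it agrees with the presentation asserted for $\mathsf{Dias}$. First I would fix a basis of the arity-$3$ component $\mathcal{O}_2(3)$ of the free nonsymmetric operad on the binary generators $\prec,\succ$: there are exactly two planar binary trees with three leaves, the left comb of shape $(ab)c$ and the right comb of shape $a(bc)$, each carrying two internal vertices to be labelled by $\prec$ or $\succ$, so writing $\mu\circ_1\nu$ for the left-comb tree with root $\mu$ and $\mu\circ_2\nu$ for the right-comb tree with root $\mu$ (with $\mu,\nu\in\{\prec,\succ\}$) gives an explicit basis of $8$ elements. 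On this space the Koszul-duality pairing, in the standard sign convention for binary quadratic nonsymmetric operads (see \cite{Loday1995,Loday2001}), is the diagonal bilinear form
\[
\langle \mu\circ_1\nu,\ \lambda\circ_1\rho\rangle = \langle\mu,\lambda\rangle\,\langle\nu,\rho\rangle,
\qquad
\langle \mu\circ_2\nu,\ \lambda\circ_2\rho\rangle = -\,\langle\mu,\lambda\rangle\,\langle\nu,\rho\rangle,
\]
with all $\circ_1$--$\circ_2$ cross terms zero, where $\langle\cdot,\cdot\rangle$ on generators is the basis-dual pairing; as usual I tacitly identify the dual generators $\prec^{\ast},\succ^{\ast}$ with $\prec,\succ$.

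Next I would expand the three defining relations of $\mathsf{Dend}$ in this basis. They span a $3$-dimensional subspace $R\subseteq\mathcal{O}_2(3)$ (the three relations have pairwise disjoint supports, so linear independence is immediate), and by definition the relation space of $\mathsf{Dend}^{!}$ is the orthogonal complement $R^\perp$, of dimension $8-3=5$. Pairing a general element $x=\sum_i x_i e_i$ against each of the three generators of $R$ and applying the sign rule above produces three linear equations in the eight coordinates of $x$, one for each dendriform relation, whose common solution space is $R^\perp$.

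I would then expand the five differences occurring in Lemma \ref{defdias} --- $(a\prec b)\prec c-a\prec(b\prec c)$, $(a\prec b)\prec c-a\prec(b\succ c)$, $(a\prec b)\succ c-(a\succ b)\succ c$, $(a\succ b)\succ c-a\succ(b\succ c)$, and $(a\succ b)\prec c-a\succ(b\prec c)$ --- in the basis, and verify two things: that each satisfies the three equations cutting out $R^\perp$, and that the five are linearly independent (each contains a basis vector appearing in none of the others, so this is transparent). Since $\dim R^\perp=5$, these five differences form a basis of $R^\perp$; collecting the resulting five relations into chains of equalities reproduces exactly the relations displayed in Lemma \ref{defdias}, and therefore $\mathsf{Dias}=\mathsf{Dend}^{!}$ has the stated presentation (in particular $\dim\mathsf{Dias}(3)=3$).

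There is no genuine obstacle here; the argument is a finite linear-algebra verification. The one point needing care is the sign attached to the right-comb ($\circ_2$) part of the Koszul pairing, since the wrong choice produces a different, incorrect dual. As a consistency check I would note that the same recipe applied to the one-generator operad $\mathcal{A}$ reproduces the classical self-duality $\mathcal{A}^{!}\cong\mathcal{A}$, and that the relation $(a\succ b)\prec c-a\succ(b\prec c)$ --- which already occurs among the defining relations of $\mathsf{Dend}$ --- is orthogonal to all three dendriform relations (it has disjoint support from the other two and pairs a single $\circ_1$-term against a single $\circ_2$-term, hence is self-orthogonal), so it lies in $R^\perp$ as well, consistent with its reappearance in the presentation of $\mathsf{Dias}$.
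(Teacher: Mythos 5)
Your proposal is correct and follows essentially the same route as the paper: both identify the relation space of $\mathsf{Dias}=\mathsf{Dend}^{!}$ with the orthogonal complement of the dendriform relation space under the pairing that negates the second parenthesization (the paper's multiplication by $\mathrm{diag}(I_4,-I_4)$), i.e.\ with the null space of the sign-adjusted $3\times 8$ matrix. The only difference is procedural --- the paper \emph{derives} a short basis of that null space via Hermite normal form and LLL reduction, whereas you \emph{verify} that the five stated differences are orthogonal and linearly independent and invoke the dimension count $8-3=5$ --- and both arguments are complete.
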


\begin{proof}
In the matrix of the dendriform relations in Definition \ref{defdend},
the $(i,j)$ entry is the coefficient of the $j$-th basis monomial in the $i$-th relation;
the basis monomials are ordered as in the expansion of Lemma \ref{splitassociator}:
  \[
  \left[
  \begin{array}{rrrrrrrr}
  1 &  0 &  0 &  0 & -1 & -1 &  0 &  0 \\[-1mm]
  0 &  1 &  0 &  1 &  0 &  0 &  0 & -1 \\[-1mm]
  0 &  0 &  1 &  0 &  0 &  0 & -1 &  0 
  \end{array}
  \right]
  \]
To compute the matrix of relations for the Koszul dual operad, we first
right-multiply by the block diagonal matrix $\mathrm{diag}(I_4,-I_4)$, changing
the signs of columns 5--8 corresponding to the second placement of parentheses; 
see \cite[Ch.~7]{LodayVallette2012}:  
  \[
  A =
  \left[
  \begin{array}{rrrrrrrr}
  1 &  0 &  0 &  0 &  1 &  1 &  0 &  0 \\[-1mm]
  0 &  1 &  0 &  1 &  0 &  0 &  0 &  1 \\[-1mm]
  0 &  0 &  1 &  0 &  0 &  0 &  1 &  0 
  \end{array}
  \right]
  \]
From this we compute a basis for the null space consisting of short vectors.
To find a short basis for the null space of an $m \times n$ integer matrix $A$ of rank $r$ we use 
integer Gaussian elimination to construct an invertible $n \times n$ integer matrix $U$ for which 
$UA^t$ is the HNF (Hermite normal form) of the transpose $A^t$, extract the submatrix $N$ consisting of 
the last $n{-}r$ rows of $U$, and use the LLL (Lenstra-Lenstra-Lov\'asz) algorithm for lattice basis reduction 
on the rows of $N$; see \cite{Bremner2012,BremnerPeresi2009}.
We obtain:
  \[
  N =
  \left[
  \begin{array}{rrrrrrrr}
  1 & 0 & 0 &  0 & -1 &  0 &  0 &  0 \\[-1mm]
  1 & 0 & 0 &  0 &  0 & -1 &  0 &  0 \\[-1mm]
  0 & 1 & 0 & -1 &  0 &  0 &  0 &  0 \\[-1mm]
  0 & 1 & 0 &  0 &  0 &  0 &  0 & -1 \\[-1mm]
  0 & 0 & 1 &  0 &  0 &  0 & -1 &  0
  \end{array}
  \right]
  \]
This is the coefficient matrix for the diassociative relations.
\end{proof}
  
\begin{definition}  
In Lemma \ref{defdias}, the first (second) pair of equations are \textbf{left (right) associativity} 
and the \textbf{left (right) bar identity}; the last is \textbf{inner associativity}.
\end{definition}

General references for algebraic operads are \cite{LodayVallette2012} for theory and \cite{BremnerDotsenko2016}
for algorithms.


\section{Polarization of the associative operad}

We take a different approach from \cite{LivernetLoday1998,MarklRemm2006} which emphasizes the method by which the shortest defining 
relations for the polarization of an operation may be discovered using the HNF and LLL algorithms.

\begin{definition} \label{defpolarized}
As in Definition \ref{defsplit} we write an operation $ab$ as the sum of two other operations, but now these
two operations are defined in terms of $ab$; they are the (scaled)
\textbf{Lie bracket} $[a,b]$ and the \textbf{Jordan product} $a \circ b$:
  \[
  [a,b] = \tfrac12(ab-ba), \qquad a \circ b = \tfrac12(ab+ba), \qquad ab = [a,b] + a \circ b.
  \]
The Lie bracket is anti-commutative, $[b,a] = -[a,b]$, and the Jordan product is
commutative, $b \circ a = a \circ b$; hence the use of the term \textbf{polarization}:
these two new operations are eigenvectors for the transposition of the arguments in $ab$. 
\end{definition}

\begin{notation}
We have passed from the nonsymmetric operad $\mathcal{O}_1$ to its 
symmetrization $\mathcal{SO}_1$: we have introduced permutations of the arguments.
The subspace $\mathcal{O}_1(3)$ of arity 3 in $\mathcal{O}_1$ has dimension 2 and basis $\{ (ab)c, ab(c) \}$; 
applying all 6 permutations to each monomial we see that $\mathcal{SO}_1(3)$ has dimension 12.
\end{notation}

\begin{definition}
Using the polarized operations of Definition \ref{defpolarized}, we obtain a different ordered basis of 
$\mathcal{SO}_1(3)$: (skew-)symmetry allows us to write every monomial in the form 
$( a^\sigma \circ_1 b^\sigma ) \circ_2 c^\sigma$
for $\sigma \in S_3$ where $a^\sigma$ precedes $b^\sigma$ in lex order.
The \textbf{polarized basis} for $\mathcal{SO}_1(3)$ is:
  \[
  \begin{array}{l@{\quad}l@{\quad}l@{\quad}l@{\quad}l@{\quad}l}
  [ [a,b], c ], &
  [ [a,c], b ], &
  [ [b,c], a ], &
  [ a \circ b, c ], &
  [ a \circ c, b ], &
  [ b \circ c, a ],
  \\ {}
  [a,b] \circ c, &
  [a,c] \circ b, &
  [b,c] \circ a, &
  ( a \circ b ) \circ c, &
  ( a \circ c ) \circ b, &
  ( b \circ c ) \circ a.
  \end{array}
  \]
\end{definition}

\begin{lemma} \label{lemmapolassoc}
The expansion of the associator in terms of the polarized basis is:
  \[
  \begin{array}{l}
  \alpha^\pm = 
  \big( \; [ [a,b], c ] \;+\; [ [b,c], a ] \; \big)
  \;+\; 
  \big( \; [ a \circ b, c ] \;+\; [ b \circ c, a ] \; \big)
  \\
  \;+\; 
  \big( \; [a,b] \circ c \;-\; [b,c] \circ a \; \big)
  \;+\; 
  \big( \; ( a \circ b ) \circ c \;-\; ( b \circ c ) \circ a \; \big).
  \end{array}
  \]
\end{lemma}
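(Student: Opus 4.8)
The plan is to carry out the substitution $ab \mapsto [a,b] + a\circ b$ inside the associator $\alpha = (ab)c - a(bc)$ and then collect terms against the polarized basis. First I would expand $(ab)c$ by applying the rule twice: the inner product gives $(ab) = [a,b] + a\circ b$, and then bracketing or Jordan-multiplying on the right by $c$ yields four terms, $[[a,b],c] + [a\circ b, c] + [a,b]\circ c + (a\circ b)\circ c$, all of which are already in the normal form required by the polarized basis (the first argument $a$ precedes the second argument $b$ in lex order, so no (skew-)symmetry is needed to rewrite them). Symmetrically, I would expand $a(bc)$: the inner product is $(bc) = [b,c] + b\circ c$, and then $a$ acts on the left, giving $[a,[b,c]] + [a, b\circ c] + a\circ [b,c] + a\circ(b\circ c)$.

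The one genuinely fiddly step is that the four terms coming from $a(bc)$ are \emph{not} in normal form: they have the "compound" argument on the right rather than the left, and the atom $a$ is involved. So I would rewrite each using commutativity of $\circ$ and anti-commutativity of $[\,,\,]$. Concretely: $[a,[b,c]] = -[[b,c],a]$; $[a,b\circ c] = -[b\circ c, a]$; $a\circ[b,c] = [b,c]\circ a$; and $a\circ(b\circ c) = (b\circ c)\circ a$. Here I must be careful with signs: moving the Lie bracket from the outer or inner slot introduces a factor $-1$ each time it is flipped, while the Jordan product contributes no sign. After this rewriting, $-a(bc)$ contributes $+[[b,c],a] + [b\circ c,a] - [b,c]\circ a - (b\circ c)\circ a$.

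Finally I would assemble $\alpha^\pm = (ab)c - a(bc)$ by adding the two lists. The eight surviving monomials group naturally into the four parenthesized pairs displayed in the statement: the two "double-bracket" terms $[[a,b],c] + [[b,c],a]$, the two "bracket-of-Jordan" terms $[a\circ b,c] + [b\circ c,a]$, the two "Jordan-of-bracket" terms $[a,b]\circ c - [b,c]\circ a$, and the two "double-Jordan" terms $(a\circ b)\circ c - (b\circ c)\circ a$. I should also note that the basis elements $[[a,c],b]$, $[a\circ c,b]$, $[a,c]\circ b$, $(a\circ c)\circ b$ do not appear — their coefficients are zero — which is consistent with the fact that expanding $(ab)c$ and $a(bc)$ only ever produces monomials whose "compound pair" is $\{a,b\}$ or $\{b,c\}$, never $\{a,c\}$.

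The main obstacle, such as it is, is purely bookkeeping: tracking the signs introduced when flipping the Lie bracket in nested position versus argument position, and making sure each of the eight raw terms is put into the unique normal form dictated by the lex-order convention on the first two arguments. There is no conceptual difficulty; the computation is a direct, finite expansion, and the claimed identity follows by inspection once everything is in normal form.
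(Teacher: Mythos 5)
Your proposal is correct and follows essentially the same route as the paper's proof: expand both $(ab)c$ and $a(bc)$ via $ab = [a,b] + a\circ b$, then use anti-commutativity of the bracket and commutativity of the Jordan product to put the four terms coming from $a(bc)$ into the normal form of the polarized basis, with exactly the signs you record. The paper's proof is the same two-line computation, so there is nothing to add.
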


\begin{proof}
We expand the associator into the polarized basis using $ab = [a,b] + a \circ b$, 
and apply (anti-)\-com\-mu\-ta\-tivity to the last four terms:
  \begin{align*}
  &
  [ [a,b], c ] + [ a {\circ} b, c ] + [a,b] {\circ} c + ( a {\circ} b ) {\circ} c
  - [ a, [b,c] ] - [ a, b {\circ} c ] - a {\circ} [b,c] - a {\circ} ( b {\circ} c )
  \\
  = \; 
  &
  [ [a,b], c ] + [ a {\circ} b, c ] + [a,b] {\circ} c + ( a {\circ} b ) {\circ} c
  + [ [b,c], a ] + [ b {\circ} c, a ] - [b,c] {\circ} a - ( b {\circ} c ) {\circ} a.
  \end{align*}
We then collect terms with the same pattern of polarized operations. 
\end{proof}

\begin{definition}
We call $\alpha^\pm$ the \textbf{polarized associativity relation}.
\end{definition}

\begin{proposition} \label{propositionpolarization}
The $S_3$-submodule of $\mathcal{SO}_1(3)$ generated by the polarized associativity relation
is also generated by these two relations with only three terms each:
  \[
  [[a, c], b] + ( a \circ b ) \circ c - ( b \circ c ) \circ a,
  \qquad
  [ a \circ b, c] - [a, c] \circ b - [b, c] \circ a.
  \]
\end{proposition}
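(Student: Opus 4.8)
The plan is to work inside the $12$-dimensional $S_3$-module $\mathcal{SO}_1(3)$ with the polarized basis, and to show that the cyclic submodule $M$ generated by $\alpha^\pm$ coincides with the submodule $M'$ generated by the two proposed three-term relations. The containment $M' \subseteq M$ and $M \subseteq M'$ will both be verified by producing explicit integer combinations of $S_3$-translates; this is a finite linear-algebra computation, exactly the kind of thing the HNF/LLL machinery in the paper is designed to organize. Concretely, I would first fix the action of $S_3$ on the polarized basis: a permutation $\tau$ sends $[[a,b],c]$ to $\pm[[a^\tau,b^\tau],c^\tau]$ and $(a\circ b)\circ c$ to $(a^\tau\circ b^\tau)\circ c^\tau$, with signs coming only from the anti-commutativity of the bracket, and then re-express each image in the ordered polarized basis (reordering arguments inside each factor to restore lex order, picking up further signs from $[x,y]=-[y,x]$). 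Writing these $6$ images of each generator as rows of a matrix over $\mathbb{Z}$ gives a concrete presentation of each submodule as a row space.

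First I would compute the $6\times 12$ matrix whose rows are the $S_3$-images of $\alpha^\pm$ and row-reduce it (over $\mathbb{Q}$ suffices for the dimension count, over $\mathbb{Z}$ via HNF if one wants the short relations to emerge canonically); call its rank $d$. Then I would do the same for the $12\times 12$ matrix formed by the $6$ images of each of the two candidate relations, and check that it has the same rank $d$ and the same row space. Equality of row spaces is checked by showing each matrix's rows lie in the other's row space, i.e. that stacking all rows does not increase the rank. The ``discovery'' direction — how one finds those particular three-term vectors — is precisely the HNF-then-LLL procedure already described in the proof of Lemma~\ref{defdias}: form the row space of the $6$ translates of $\alpha^\pm$, take its null space complement appropriately, and run LLL to pull out the shortest lattice vectors; the two displayed relations are the output, which is why they have only three terms.

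I expect the main obstacle to be bookkeeping the signs correctly when rewriting an $S_3$-image back into the ordered polarized basis: each basis monomial $(x\circ y)\circ z$ or $[[x,y],z]$ or $[x,y]\circ z$ or $[x\circ y,z]$ must be normalized so that $x$ precedes $y$ in lex order, and for the bracketed slot this flips a sign, so a single permutation can act with different signs on different basis elements. Getting one sign wrong changes the rank or the row space and breaks the argument, so the bulk of the care goes into tabulating, for all six $\sigma\in S_3$, the image of each of the twelve basis vectors. Once that table is correct, the rest is a deterministic rank/row-space comparison. A lighter-weight alternative for the write-up is to verify the two containments symbolically: expand each $S_3$-translate of the two short relations in the polarized basis and exhibit $\alpha^\pm$ as an explicit $\mathbb{Z}$-combination of them (this gives $M\subseteq M'$), and conversely expand enough translates of $\alpha^\pm$ to recover each short relation as a $\mathbb{Z}$-combination (this gives $M'\subseteq M$); presenting just those two explicit combinations is shorter than displaying full matrices and is the form I would ultimately put in the paper.
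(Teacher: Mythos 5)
Your proposal is correct and follows essentially the same route as the paper: both reduce the claim to comparing the integer row spaces of the matrices of $S_3$-translates of $\alpha^\pm$ and of the two three-term relations in the polarized basis of $\mathcal{SO}_1(3)$, with the short relations themselves discovered via the HNF/LLL procedure. The paper additionally records the intermediate lattice bases $N_2$, $N_3$ and a two-stage extraction showing the two relations form a \emph{minimal} generating set, but that is a refinement beyond what the stated proposition requires.
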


\begin{proof}
Our goal is to find the shortest integer vectors in the submodule generated by $\alpha^\pm$,
and then from these, find a minimal set of generators for the submodule.
We apply the permutations in $S_3$ to the arguments of $\alpha^\pm$ and store 
the results in the $6 \times 12$ matrix $P$ whose $(i,j)$ entry is the coefficient of the $j$-th
polarized basis monomial in the application of the $i$-th permutation (in lex order).
We obtain this matrix whose rows form a basis of the 
$S_3$-submodule of $\mathcal{SO}_1(3)$ generated by $\alpha^\pm$:
  \[
  P =
  \left[
  \begin{array}{rrrrrrrrrrrr}
   1 &  0 &  1 & 1 & 0 & 1 &  1 &  0 & -1 &  1 &  0 & -1 \\[-1mm]
   0 &  1 & -1 & 0 & 1 & 1 &  0 &  1 &  1 &  0 &  1 & -1 \\[-1mm]
  -1 &  1 &  0 & 1 & 1 & 0 & -1 & -1 &  0 &  1 & -1 &  0 \\[-1mm]
   0 & -1 &  1 & 0 & 1 & 1 &  0 &  1 &  1 &  0 & -1 &  1 \\[-1mm]
   1 & -1 &  0 & 1 & 1 & 0 & -1 & -1 &  0 & -1 &  1 &  0 \\[-1mm]
  -1 &  0 & -1 & 1 & 0 & 1 &  1 &  0 & -1 & -1 &  0 &  1
  \end{array}
  \right]
  \]
We want to find the shortest integer vectors in the row space of $P$. 
This requires two iterations of the method of the previous section to find a short
integer basis of the null space: the row space is the null space of the null space.
We first compute an invertible $12 \times 12$ integer matrix $U_1$ for which $U_1 P^t$ is the HNF
of $P^t$, and let $N_1$ be the $6 \times 12$ matrix consisting of the bottom half of $U_1$.
We then compute an invertible $12 \times 12$ integer matrix $U_2$ for which $U_2 N_1^t$ is the HNF
of $N_1^t$, and let $N_2$ be the $6 \times 12$ matrix consisting of the bottom half of $U_2$:
  \[
  N_2 =
  \left[
  \begin{array}{rrrrrrrrrrrr}
  -1 &  1 & -1 &  0 &  0 &  0 & 0 & 0 & 0 &  0 & 0 & 0 \\[-1mm]
   0 &  0 &  0 & -1 & -1 &  0 & 1 & 1 & 0 &  0 & 0 & 0 \\[-1mm]
   0 &  0 &  0 & -1 &  0 &  0 & 0 & 1 & 1 &  0 & 0 & 0 \\[-1mm]
   0 &  0 &  0 & -1 & -1 & -1 & 0 & 0 & 0 &  0 & 0 & 0 \\[-1mm]
   0 &  0 & -1 &  0 &  0 &  0 & 0 & 0 & 0 & -1 & 1 & 0 \\[-1mm]
   0 & -1 &  0 &  0 &  0 &  0 & 0 & 0 & 0 & -1 & 0 & 1
  \end{array}
  \right]
  \]
The squared Euclidean lengths of the rows of $N_2$ are 3, 4, 3, 3, 3, 3.  
If we apply the LLL algorithm for lattice basis reduction with higher reduction parameter 9/10 
(instead of the usual 3/4) then we obtain a lattice basis $N_3$ for the integer row space of $N_2$ in 
which every vector has square-length 3; we have also put these basis vectors into upper triangular form:
  \[
  N_3 =
  \left[
  \begin{array}{rrrrrrrrrrrr}
  1 & -1 & 1 & 0 & 0 & 0 & 0 & 0 & 0 & 0 & 0 & 0 \\[-1mm]
  0 & 1 & 0 & 0 & 0 & 0 & 0 & 0 & 0 & 1 & 0 & -1 \\[-1mm]
  0 & 0 & 1 & 0 & 0 & 0 & 0 & 0 & 0 & 1 & -1 & 0 \\[-1mm]
  0 & 0 & 0 & 1 & 1 & 1 & 0 & 0 & 0 & 0 & 0 & 0 \\[-1mm]
  0 & 0 & 0 & 1 & 0 & 0 & 0 & -1 & -1 & 0 & 0 & 0 \\[-1mm]
  0 & 0 & 0 & 0 & 1 & 0 & -1 & 0 & 1 & 0 & 0 & 0
  \end{array}
  \right]
  \]
We write out the relations corresponding to the rows of $N_3$, but recalling that these are basis vectors, 
where all we need is a set of module generators:
  \begin{equation}
  \label{N3basis}
  \begin{array}{l}
      [[a, b], c]  
   -  [[a, c], b]  
   +  [[b, c], a]  
  = 0,
  \\ {}
      [[a, c], b]  
   +  ( a \circ b ) \circ c  
   -  ( b \circ c ) \circ a  
  = 0,
  \\ {}
      [[b, c], a]  
   +  ( a \circ b ) \circ c  
   -  ( a \circ c ) \circ b  
  = 0,
  \\ {}
      [ a \circ b, c ]  
   +  [ a \circ c, b ]  
   +  [ b \circ c, a ]  
  = 0,
  \\ {}
      [ a \circ b, c]  
   -  [a, c] \circ b  
   -  [b, c] \circ a  
  = 0,
  \\ {}
      [ a \circ c, b ]  
   -  [a, b] \circ c  
   +  [b, c] \circ a  
  = 0.
  \end{array}
  \end{equation}
We extract a set of module generators in two stages.
First, we retain row $i$ if and only if the relation it represents does not belong to
the submodule generated by the previous rows: this leaves us with rows 1, 2, 4, 5.
Second, we remove each row from the set of four generators and compute the submodule
generated by the remaining three relations, retaining a row if and only if the three relations
generate a proper submodule; this leaves us with rows 2 and 5.
\end{proof}

\begin{definition}
The relations of Proposition \ref{propositionpolarization} are the \textbf{associator relation} and 
the \textbf{derivation relation}, since they may be written as follows:
  \[
  ( a \circ b ) \circ c  -  a \circ ( b \circ c )  
  = 
  [ [c, a], b],
  \qquad
  [ a \circ b, c]
  =  
  [a, c] \circ b + a \circ [b, c].
  \]
The first expresses the Jordan associator in terms of the Lie triple product, and the second states that the Lie bracket 
is a derivation of the Jordan product.
\end{definition}


\section{Deforming the polarization: the Poisson operad}

\begin{definition} \label{defpolsymassoperad}
The \textbf{deformation of the polarization of the symmetrized associative operad} \cite{LivernetLoday1998,MarklRemm2006} is defined by 
these three relations satisfied by an anticommutative product $[a,b]$ and a commutative product $a \circ b$:
  \[
  \begin{array}{l}
  [[a, b], c] - [[a, c], b] + [[b, c], a] = 0,
  \\ {}
  q [[a, c], b] + ( a \circ b ) \circ c - ( b \circ c ) \circ a = 0,
  \\ {}
  [ a \circ b, c] - [a, c] \circ b - [b, c] \circ a = 0.
  \end{array}
  \]
This symmetric operad $\mathcal{SA}_q$ is a module over the polynomial ring $\mathbb{F}[q]$.
\end{definition}

\begin{remark}
In Definition \ref{defpolsymassoperad}, the deformation parameter $q$ appears only in the coefficient of 
the first term of the second relation.
Otherwise, these relations are identical to 1, 2, 5 of \eqref{N3basis}.
The first relation is the Jacobi identity for Lie algebras.
If $q \ne 0$ then the first relation is the alternating sum of the second, but it must be included
for $q = 0$.
If $q = 1$ then these relations are equivalent to the associator and derivation relations of Proposition
\ref{propositionpolarization}.
\end{remark}

\begin{lemma}
The HNF of the matrix whose row module over $\mathbb{F}[q]$ is the $S_3$-module generated by the 
second and third relations in Definition \ref{defpolsymassoperad} is:
  \[
  \left[
  \begin{array}{rrrrrrrrrrrr}
  q & 0 & 0 & 0 & 0 & 0 &  0 &  0 &  0 & 0 &  1 & -1 \\
  0 & q & 0 & 0 & 0 & 0 &  0 &  0 &  0 & 1 &  0 & -1 \\
  0 & 0 & q & 0 & 0 & 0 &  0 &  0 &  0 & 1 & -1 &  0 \\
  0 & 0 & 0 & 1 & 0 & 0 &  0 & -1 & -1 & 0 &  0 &  0 \\
  0 & 0 & 0 & 0 & 1 & 0 & -1 &  0 &  1 & 0 &  0 &  0 \\
  0 & 0 & 0 & 0 & 0 & 1 &  1 &  1 &  0 & 0 &  0 &  0
  \end{array}
  \right]
  \]
If $q \ne 0$, this matrix has rank 6, but if $q = 0$, its rank is only 5.
\end{lemma}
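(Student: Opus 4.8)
The plan is to make the $S_3$-action on the polarized basis of $\mathcal{SO}_1(3)$ explicit, apply it to the two generating relations, and then observe that the resulting $12\times 12$ matrix is, after dropping zero rows, already in Hermite normal form.

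First I would record how $\sigma\in S_3$ acts on the twelve polarized basis monomials: $\sigma$ relabels the three arguments, after which one applies $[b,a]=-[a,b]$ and $b\circ a=a\circ b$ to return each monomial to its normal form, so a sign may appear. Applying the six permutations to the second relation $q[[a,c],b]+(a\circ b)\circ c-(b\circ c)\circ a$, one finds that its six images are, up to sign, only the three vectors
\[
q[[a,b],c]+(a\circ c)\circ b-(b\circ c)\circ a,\qquad q[[a,c],b]+(a\circ b)\circ c-(b\circ c)\circ a,\qquad q[[b,c],a]+(a\circ b)\circ c-(a\circ c)\circ b,
\]
which in the order of the polarized basis are exactly rows $1$--$3$ of the asserted matrix, call it $H$ with rows $R_1,\dots,R_6$. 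Likewise the six images of the third relation $[a\circ b,c]-[a,c]\circ b-[b,c]\circ a$ reduce (each occurring twice) to the three vectors $[a\circ b,c]-[a,c]\circ b-[b,c]\circ a$, $\ [a\circ c,b]-[a,b]\circ c+[b,c]\circ a$, $\ [b\circ c,a]+[a,b]\circ c+[a,c]\circ b$, which are rows $4$--$6$ of $H$. Hence the $12\times 12$ matrix of all twelve permuted relations has the same row module over $\mathbb{F}[q]$ as $H$.

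Second I would check that $H$ is in Hermite normal form. Its pivots lie in columns $1$ through $6$ with leading entries $q,q,q,1,1,1$, all monic in $\mathbb{F}[q]$; each pivot is the only nonzero entry of its column, and the remaining nonzero entries occupy columns $7$--$12$, which carry no pivot, so nothing is left to reduce and no further row operation is available. This is precisely the step where one must remember that $\mathbb{F}[q]$ is not a field, so the three $q$-pivots may not be used to clear other rows; fortunately they need not be, because the pivot columns are disjoint from the supports of the off-pivot entries. Since $R_1,\dots,R_6$ have leading entries in distinct columns they are $\mathbb{F}[q]$-linearly independent, so they form a basis of the row module over $\mathbb{F}[q]$ and $H$ is its Hermite normal form.

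Finally, for the rank assertion I would specialize $q$. When $q\ne 0$, the rows of $H$ remain in echelon form with nonzero pivots $q,q,q,1,1,1$, so the rank is $6$. When $q=0$, rows $R_4,R_5,R_6$ are unchanged and still independent, while $R_1,R_2,R_3$ become $(a\circ c)\circ b-(b\circ c)\circ a$, $(a\circ b)\circ c-(b\circ c)\circ a$, $(a\circ b)\circ c-(a\circ c)\circ b$; these satisfy $R_1-R_2+R_3=0$, which reflects the identity $R_1-R_2+R_3=q\bigl([[a,b],c]-[[a,c],b]+[[b,c],a]\bigr)$ valid for every $q$ — the Jacobi identity scaled by $q$. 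Thus at $q=0$ rows $R_1,R_2,R_3$ span only a two-dimensional space and the total rank is $5$. The only genuine bookkeeping is the sign tracking in the $S_3$-action, and that is where I would expect any computational slip to occur; everything after that is immediate.
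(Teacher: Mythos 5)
Your proposal is correct: I checked the sign bookkeeping in the $S_3$-action, and the six images of the second relation are indeed $\pm R_1,\pm R_2,\pm R_3$ while those of the third are $R_4,R_5,R_6$ each twice, so the row module is generated by $R_1,\dots,R_6$, which are already in echelon form with monic pivots $q,q,q,1,1,1$ and nothing to reduce; the rank drop at $q=0$ via $R_1-R_2+R_3=q\cdot(\text{Jacobi})$ is also right. The paper states this lemma without proof as the output of a machine HNF computation, and your argument is exactly that computation carried out and justified by hand, so it is essentially the same approach.
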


\begin{definition}
If we set $q = 0$ in Definition \ref{defpolsymassoperad} then we obtain the relations defining 
a \textbf{Poisson algebra}, which has a commutative associative operation $a \circ b$ and 
a Lie bracket $[a,b]$ (anticommutative operation satisfying the Jacobi identity), where
the Lie bracket acts as derivations of the commutative associative product:
  \[
  \begin{array}{l}
  [[a, b], c] + [[b, c], a] + [[c, a], b] = 0,
  \\ {}
  ( a \circ b ) \circ c = ( b \circ c ) \circ a,
  \\ {}
  [ a \circ b, c] = [a, c] \circ b + a \circ [b, c].
  \end{array}
  \]
\end{definition}


\section{Splitting the polarization: deformation of dendriform}

\begin{algorithm} \label{algorithm41}
First, we expand the three relations of Definition \ref{defpolsymassoperad}, using the definitions of the Lie bracket 
and Jordan product, into the free symmetric operad $\mathcal{SO}_1$ with one binary operation (not the symmetrized 
associative operad, since we need to keep track of the placements of parentheses).

Second, we replace the binary operation in $\mathcal{SO}_1$ by the sum of left and right operations in $\mathcal{SO}_2$ 
using the splitting morphism of Definition \ref{defsplit}, and apply Definition \ref{dendsplit} to decompose each relation 
into three corresponding dendriform relations.

The results belong to $\mathcal{SO}_2(3)$, the 48-dimensional space of arity 3 relations in the 
free symmetric operad generated by two binary operations $\prec$, $\succ$.
\end{algorithm}

\begin{notation} \label{notationSO23}
An ordered basis for $\mathcal{SO}_2(3)$ consists of 8 groups of 6 elements; each group consists
of the permutations of $a,b,c$ in lex order applied to the arguments (indicated by dashes) of
the following ordered association types:
  \[
  \quad
  \begin{array}{c@{\qquad}c@{\qquad}c@{\qquad}c}
  ( - \prec - ) \prec -, &
  ( - \prec - ) \succ -, &
  ( - \succ - ) \prec -, &
  ( - \succ - ) \succ -,
  \\[-1mm]
  - \prec ( - \prec - ), &
  - \prec ( - \succ - ), &
  - \succ ( - \prec - ), &
  - \succ ( - \succ - ).  
  \end{array}
  \]
We write vectors with 48 components as $4 \times 12$ matrices in which the $(i,j)$ entry is the $k$-th entry of 
the vector for $k = 12(i{-}1) + j$.
\end{notation}

\begin{lemma} \label{lemmasplitdeform}
After applying part 1 of Algorithm \ref{algorithm41} to the relations of Definition \ref{defpolsymassoperad}
we obtain the following elements of $\mathcal{SO}_2(3)$:
  \begin{align*}
  &
  \left[
  \begin{array}{rrrrrrrrrrrr}
  1 & -1 & -1 & 1 & 1 & -1 & 1 & -1 & -1 & 1 & 1 & -1 \\
  1 & -1 & -1 & 1 & 1 & -1 & 1 & -1 & -1 & 1 & 1 & -1 \\
  -1 & 1 & 1 & -1 & -1 & 1 & -1 & 1 & 1 & -1 & -1 & 1 \\
  -1 & 1 & 1 & -1 & -1 & 1 & -1 & 1 & 1 & -1 & -1 & 1
  \end{array}
  \right]
  \\
  &
  \left[
  \begin{array}{rrrrrrrrrrrr}
  1 & q & 1 & -1 & -q & -1 & 1 & q & 1 & -1 & -q & -1 \\
  1 & q & 1 & -1 & -q & -1 & 1 & q & 1 & -1 & -q & -1 \\
  -1 & -1 & -q & q & 1 & 1 & -1 & -1 & -q & q & 1 & 1 \\
  -1 & -1 & -q & q & 1 & 1 & -1 & -1 & -q & q & 1 & 1
  \end{array}
  \right]
  \\
  &
  \left[
  \begin{array}{rrrrrrrrrrrr}
  1 & -1 & 1 & -1 & 1 & 1 & 1 & -1 & 1 & -1 & 1 & 1 \\
  1 & -1 & 1 & -1 & 1 & 1 & 1 & -1 & 1 & -1 & 1 & 1 \\
  -1 & 1 & -1 & 1 & -1 & -1 & -1 & 1 & -1 & 1 & -1 & -1 \\
  -1 & 1 & -1 & 1 & -1 & -1 & -1 & 1 & -1 & 1 & -1 & -1
  \end{array}
  \right]
  \end{align*}
\end{lemma}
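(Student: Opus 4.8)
The plan is to treat this as a direct, bookkeeping-heavy computation, so the proof proposal is really a description of how to organize and verify that computation rather than any deep argument. First I would fix, once and for all, the ordered basis of $\mathcal{SO}_1(3)$ used implicitly in part 1 of Algorithm \ref{algorithm41}: the association types $(--)-$ and $-(--)$, each with the six permutations of $a,b,c$ in lex order, giving a $12$-dimensional space; and separately the ordered basis of $\mathcal{SO}_2(3)$ exactly as laid out in Notation \ref{notationSO23}, with the $4\times 12$ matrix convention for the $48$ coordinates. The output of Lemma \ref{lemmasplitdeform} is stated as elements of $\mathcal{SO}_2(3)$, but part 1 of the algorithm only expands into $\mathcal{SO}_1$; so strictly I would first produce the three length-$12$ vectors in $\mathcal{SO}_1(3)$ obtained by substituting $[a,b]=\tfrac12(ab-ba)$ and $a\circ b=\tfrac12(ab+ba)$ into the three relations of Definition \ref{defpolsymassoperad} and clearing the common denominator, and then observe that the three displayed $4\times 12$ matrices are precisely the images of those vectors under the coordinatewise splitting $ab\mapsto a\prec b + a\succ b$ applied association-type by association-type — which is why, in each matrix, the first two rows (the $\prec$-outer types $(-\prec-)\prec-$ and $(-\prec-)\succ-$... wait, rather the two "left-placed" rows) coincide and the last two coincide, up to the bookkeeping that the excerpt's presentation collapses.

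Concretely, the key steps in order would be: (i) Expand relation 1, the Jacobi/alternating relation $[[a,b],c]-[[a,c],b]+[[b,c],a]$, using $[[x,y],z]=\tfrac12\big((xy)z-(yx)z-z(xy)+z(yx)\big)$ and then $[[x,y],z]=\tfrac14\big((xy)z-(yx)z-(yx)z+... \big)$ — i.e. fully resolve the double bracket into the eight monomials $(xy)z,(yx)z,z(xy),z(yx),\dots$ with coefficient $\tfrac14$ each sign pattern — sum over the three cyclic-with-sign terms, and record the resulting length-$12$ integer vector after multiplying by $4$. (ii) Do the same for relation 2, $q[[a,c],b]+(a\circ b)\circ c-(b\circ c)\circ a$, where now $(x\circ y)\circ z=\tfrac14\big((xy)z+(yx)z+z(xy)+z(yx)\big)$, so the $q$-dependence is linear and lands only in the bracket part; multiply by $4$ to clear denominators, yielding a vector with entries in $\{\pm1,\pm q\}$ plus small integers. (iii) Do the same for relation 3, $[a\circ b,c]-[a,c]\circ b-[b,c]\circ a$, a mixed bracket-of-Jordan and Jordan-of-bracket, again clearing denominators. (iv) Apply the splitting morphism $\mathsf{split}$ of Definition \ref{defsplit} to each monomial — $(xy)z\mapsto(x\prec y)\prec z+(x\prec y)\succ z+(x\succ y)\prec z+(x\succ y)\succ z$ and symmetrically for $z(xy)$ — extended $\mathcal{SO}_1$-linearly, and collect the $48$ coordinates into the $4\times 12$ format. (v) Compare the three resulting matrices, entry by entry, with the three displayed in Lemma \ref{lemmasplitdeform}, and check equality (noting that the normalization — multiplication by the common denominator $4$ — is absorbed, since $\tfrac14\cdot 4=1$, so no spurious scalar appears).

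Because the statement is an assertion of the form "these are the matrices," there is no genuine obstacle of technique, only of accuracy: the main risk is sign and ordering errors. I would therefore build in two independent consistency checks. The structural check is symmetry: each of the three expanded $\mathcal{SO}_1$-relations is either symmetric or antisymmetric under the relevant transpositions inherited from the (anti)commutativity of $[\,,\,]$ and $\circ$, so the corresponding $4\times 12$ matrix must exhibit the matching block structure after splitting — in particular the repetition of rows $1\equiv 2$ (the two "first-parenthesization, left-inner" types) and $3\equiv 4$ visible in all three displayed matrices, and within each row the pattern of the $12$ permutation-columns must be a signed permutation of itself under the $S_3$-action. The numerical check is to pick one or two specific monomials, say the coefficient of $(a\prec b)\prec c$ and of $a\succ(b\succ c)$, in each relation and recompute it by hand from the definitions, confirming it against the $(1,1)$ and $(4,12)$ entries of the respective matrix. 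The only place where care beyond routine is needed is keeping the $q$ in relation 2 attached to exactly the right monomials through the splitting — since $q$ multiplies $[[a,c],b]$, after splitting it decorates precisely those eight $\prec/\succ$ monomials arising from the two parenthesizations of the $acb$-pattern, which should reproduce the $q$-entries in columns $2$ and $5$ (and their negatives in $8$ and $11$) of the second matrix. Once all three matrices match and both checks pass, the lemma is established.
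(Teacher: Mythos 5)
Your proposal is correct and matches the paper's (implicit) proof: the paper states this lemma without a proof environment, the intended justification being exactly the direct expansion-and-splitting computation you describe, and your reading that the splitting morphism must also be applied (despite the lemma's reference to ``part 1'' only) is the right one, as confirmed by the row-duplication structure ($1\equiv 2$, $3\equiv 4$) of the displayed $4\times 12$ matrices. One small slip in your final sanity check: columns 8 and 11 of the second matrix are copies of columns 2 and 5 (each six-column permutation block is simply repeated across the four split association types of the same parenthesization), not their negatives.
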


\begin{definition}
We construct the $18 \times 48$ matrix $X$ in which rows 1--6, 7--12, 13--18 respectively are 
obtained by applying all permutations of the arguments $a,b,c$ to the relations of Lemma \ref{lemmasplitdeform}.
To apply part 2 of Algorithm \ref{algorithm41}, we partition $X$ into $18 \times 6$ blocks $X_1,\dots,X_8$ corresponding 
to the association types of Notation \ref{notationSO23}, and rearrange them following the dendriform splitting 
into the $54 \times 48$ matrix $Y$:
  \[
  Y =
  \left[
  \begin{array}{ccc}
  \left[ \begin{array}{ccc} X_1 & X_5 & X_6 \end{array} \right] & 0 & 0 
  \\
  0 & \left[ \begin{array}{ccc} X_2 & X_4 & X_8 \end{array} \right] & 0
  \\
  0 & 0 & \left[ \begin{array}{ccc} X_3 & X_7 \end{array} \right]
  \end{array}
  \right]
  \]
The diagonal blocks in $Y$ have sizes $18 \times 18$, $18 \times 18$, $18 \times 12$ but each has rank 6.
\end{definition}

  \begin{figure}[ht]
  \small
  \[
  \begin{array}{c}
  \left[
  \begin{array}{c@{\;}c@{\;}c@{\;}c@{\;}c@{\;}c@{\;}c@{\;}c@{\;}c@{\;}c@{\;}c@{\;}c@{\;}c@{\;}c@{\;}c@{\;}c@{\;}c@{\;}c}
  1 & . & . & . & . & 1 & {-}1 & . & . & . & . & {-}1 & {-}1 & . & . & . & . & {-}1 \\
  . & 1 & . & . & {-}1 & 1 & . & {-}1 & . & . & 1 & {-}1 & . & {-}1 & . & . & 1 & {-}1 \\
  . & . & 1 & . & 1 & . & . & . & {-}1 & . & {-}1 & . & . & . & {-}1 & . & {-}1 & . \\
  . & . & . & 1 & 1 & {-}1 & . & . & . & {-}1 & {-}1 & 1 & . & . & . & {-}1 & {-}1 & 1 \\
  . & . & . & . & q{+}3 & . & q{-}1 & {-}q{+}1 & . & {-}q{+}1 & {-}q{-}3 & q{-}1 & q{-}1 & {-}q{+}1 & . & {-}q{+}1 & {-}q{-}3 & q{-}1 \\
  . & . & . & . & . & q{+}3 & . & {-}q{+}1 & q{-}1 & {-}q{+}1 & q{-}1 & {-}q{-}3 & . & {-}q{+}1 & q{-}1 & {-}q{+}1 & q{-}1 & {-}q{-}3
  \end{array}
  \right]
  \\[11mm]
  \left[
  \begin{array}{c@{\;}c@{\;}c@{\;}c@{\;}c@{\;}c@{\;}c@{\;}c@{\;}c@{\;}c@{\;}c@{\;}c@{\;}c@{\;}c@{\;}c@{\;}c@{\;}c@{\;}c}
  1 & . & . & . & . & 1 & 1 & . & . & . & . & 1 & {-}1 & . & . & . & . & {-}1 \\
  . & 1 & . & . & {-}1 & 1 & . & 1 & . & . & {-}1 & 1 & . & {-}1 & . & . & 1 & {-}1 \\
  . & . & 1 & . & 1 & . & . & . & 1 & . & 1 & . & . & . & {-}1 & . & {-}1 & . \\
  . & . & . & 1 & 1 & {-}1 & . & . & . & 1 & 1 & {-}1 & . & . & . & {-}1 & {-}1 & 1 \\
  . & . & . & . & q{+}3 & . & . & . & . & . & q{+}3 & . & q{-}1 & {-}q{+}1 & . & {-}q{+}1 & {-}q{-}3 & q{-}1 \\
  . & . & . & . & . & q{+}3 & . & . & . & . & . & q{+}3 & . & {-}q{+}1 & q{-}1 & {-}q{+}1 & q{-}1 & {-}q{-}3
  \end{array}
  \right]
  \\[11mm]
  \left[
  \begin{array}{c@{\;}c@{\;}c@{\;}c@{\;}c@{\;}c@{\;}c@{\;}c@{\;}c@{\;}c@{\;}c@{\;}c@{\;}c@{\;}c@{\;}c@{\;}c@{\;}c@{\;}c}
  1 & . & . & . & . & 1 & {-}1 & . & . & . & . & {-}1 \\
  . & 1 & . & . & {-}1 & 1 & . & {-}1 & . & . & 1 & {-}1 \\
  . & . & 1 & . & 1 & . & . & . & {-}1 & . & {-}1 & . \\
  . & . & . & 1 & 1 & {-}1 & . & . & . & {-}1 & {-}1 & 1 \\
  . & . & . & . & q{+}3 & . & q{-}1 & {-}q{+}1 & . & {-}q{+}1 & {-}q{-}3 & q{-}1 \\
  . & . & . & . & . & q{+}3 & . & {-}q{+}1 & q{-}1 & {-}q{+}1 & q{-}1 & {-}q{-}3
  \end{array}
  \right]
  \end{array}
  \]
  \vspace{-5mm}
  \caption{Hermite normal forms of the diagonal blocks of $Y$}
  \label{HNFY}
  \end{figure}

We compute the HNFs of the diagonal blocks of $Y$; after removing zero rows (for this we write $\overline{\mathrm{HNF}}$), 
they have sizes $6 \times 18$, $6 \times 18$, $6 \times 12$; see Figure \ref{HNFY}.
We write
  \[
  \overline{\mathrm{HNF}}(Y) = 
  \left[
  \begin{array}{ccc}
  \overline{\mathrm{HNF}}\big( \left[ \begin{array}{@{}c@{\;}c@{\;}c@{}} X_1 & X_5 & X_6 \end{array} \right] \big) & 0 & 0 
  \\
  0 & \overline{\mathrm{HNF}}\big( \left[ \begin{array}{@{}c@{\;}c@{\;}c@{}} X_2 & X_4 & X_8 \end{array} \right] \big) & 0
  \\
  0 & 0 & \overline{\mathrm{HNF}}\big( \left[ \begin{array}{@{}c@{\;}c@{}} X_3 & X_7 \end{array} \right] \big)
  \end{array}
  \right]
  \]  
We sort the rows of $\overline{\mathrm{HNF}}(Y)$ so that all the rows containing $q$ come first, and 
apart from this, rows with fewer nonzero entries come first.
We retain only those rows which do not belong to the $S_3$-submodule generated by the previous rows, 
and obtain the following relations.

\begin{theorem}
The following three relations define a one-parameter deformation of the nonsymmetric dendriform operad $\mathsf{Dend}$
into the category of symmetric operads:
  \begin{align*}
  &
  (q+3) 
  \big[ 
  ( ( a \succ b ) \prec c ) - ( a \succ ( b \prec c ) ) 
  \big]
  \\[-1mm]
  &\qquad
  +
  (q-1)
  \big[
  ( a \succ ( c \prec b ) ) - ( b \succ ( a \prec c ) )  
  + ( b \succ ( c \prec a ) ) - ( c \succ ( a \prec b ) )
  \big]
  = 0, 
  \\ 
  &
  (q+3)
  \big[
  ( ( a \prec b ) \succ c ) + ( ( a \succ b ) \succ c ) - ( a \succ ( b \succ c ) ) 
  \big]
  \\[-1mm]
  &\qquad
  +
  (q-1)
  \big[
  ( a \succ ( c \succ b ) ) - ( b \succ ( a \succ c ) )  
  + ( b \succ ( c \succ a ) ) - ( c \succ ( a \succ b ) )  
  \big]
  = 0,  
  \\
  &
  (q+3)
  \big[
  ( ( a \prec b ) \prec c ) - ( a \prec ( b \prec c ) ) - ( a \prec ( b \succ c ) )
  \big]
  \\[-1mm]
  &\qquad
  +
  (q-1)
  \big[
  ( a \prec ( c \prec b ) ) - ( b \prec ( a \prec c ) )  
  + ( b \prec ( c \prec a ) ) - ( c \prec ( a \prec b ) )  
  \big]
  \\[-1mm]
  &\qquad
  +
  (q-1)
  \big[
  ( a \prec ( c \succ b ) ) - ( b \prec ( a \succ c ) )  
  + ( b \prec ( c \succ a ) ) - ( c \prec ( a \succ b ) )  
  \big]
  =  0.
  \end{align*}
For $q = 1$ we obtain (4 times) the nonsymmetric dendriform relations.
\end{theorem}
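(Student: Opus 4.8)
The plan is to carry out Algorithm \ref{algorithm41} explicitly and to track the linear algebra over the ring $\mathbb{F}[q]$. First I would substitute $[a,b] = \tfrac12(ab-ba)$ and $a \circ b = \tfrac12(ab+ba)$ into the three relations of Definition \ref{defpolsymassoperad}, expand each into the arity-$3$ component $\mathcal{SO}_1(3)$ of the free symmetric operad on one binary operation, and clear denominators (a common factor of $4$, since every monomial carries two operations); then apply the morphism $\mathsf{split}$ of Definition \ref{defsplit} and express the results against the ordered basis of Notation \ref{notationSO23}, which should reproduce exactly the three $4 \times 12$ matrices of Lemma \ref{lemmasplitdeform}. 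This is a routine but lengthy expansion, and the only real care needed is a consistent ordering of the $48$ basis monomials of $\mathcal{SO}_2(3)$.

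Next I would assemble the matrices. Applying the six permutations of $\{a,b,c\}$ to each of the three relations produces the $18 \times 48$ matrix $X$; partitioning its columns into the eight $18 \times 6$ blocks $X_1,\dots,X_8$ indexed by association type and regrouping them as $[X_1\;X_5\;X_6]$, $[X_2\;X_4\;X_8]$, $[X_3\;X_7]$ on the diagonal yields the $54 \times 48$ matrix $Y$ of the dendriform splitting (Definition \ref{dendsplit}). I would then compute the Hermite normal form of each diagonal block over $\mathbb{F}[q]$, delete the zero rows, and check agreement with Figure \ref{HNFY}; each block has rank $6$, so $\overline{\mathrm{HNF}}(Y)$ has $18$ rows, and since the three blocks occupy pairwise disjoint rows and columns the row $\mathbb{F}[q]$-module of $Y$ --- hence the $S_3$-submodule of $\mathcal{SO}_2(3)$ it generates --- is unchanged by this block-wise reduction.

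Then comes the extraction of generators. I would sort the $18$ rows so that those containing $q$ come first and, within that, rows with fewer nonzero entries precede longer ones, and greedily retain a row exactly when it does not lie in the $S_3$-submodule over $\mathbb{F}[q]$ generated by the rows retained so far. This submodule-membership test is the step I expect to be the main obstacle: $\mathbb{F}[q]$ is a PID but not a field, so ordinary Gaussian elimination is unavailable, and one needs a Gr\"obner-basis / syzygy computation for $\mathbb{F}[q]$-submodules of $\mathbb{F}[q]^{48}$ (after closing each generating set under the $S_3$-action), with extra care at the degenerate values $q = 1$ and $q = -3$, where $q-1$ and $q+3$ vanish. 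The claim is that exactly three rows survive; rewriting them through Notation \ref{notationSO23} gives the three displayed identities, and I would verify that translation monomial by monomial and check separately that the $S_3$-submodule generated by just these three relations still contains all $18$ rows of $\overline{\mathrm{HNF}}(Y)$, so that nothing has been discarded.

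Finally, the specialization at $q = 1$ is immediate by inspection: there $q-1 = 0$ annihilates every bracketed block carrying a $(q-1)$ coefficient while $q+3 = 4$, so the three relations collapse to $4$ times the two defining relations of $\mathsf{Dend}$ and $4$ times inner associativity (Definition \ref{defdend}). Since $4$ is a unit in $\mathbb{F}$, the ideal they generate is the ideal of relations of the symmetrization of $\mathsf{Dend}$, so the quadratic symmetric $\mathbb{F}[q]$-operad cut out by the three relations is a genuine one-parameter deformation of $\mathsf{Dend}$ in the category of symmetric operads, specializing to $\mathsf{Dend}$ at $q = 1$. One could further inspect the Hilbert series of the generic fiber to quantify how nontrivial the deformation is, but that is not needed for the statement.
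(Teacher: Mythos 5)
Your proposal is correct and follows essentially the same route as the paper: expand the deformed polarized relations into $\mathcal{SO}_2(3)$, assemble $X$, regroup the association-type blocks into the block-diagonal $Y$ according to the dendriform splitting, reduce each block to Hermite normal form over $\mathbb{F}[q]$, and greedily extract $S_3$-module generators, with the $q=1$ specialization checked by setting $q-1=0$ and $q+3=4$. The only additions you make beyond the paper's argument are sensible sanity checks (verifying that the retained rows still generate all of $\overline{\mathrm{HNF}}(Y)$, and flagging the degenerate values $q=1$, $q=-3$ in the submodule-membership test), which do not change the method.
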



\section{Dualizing the split polarization: deformation of diassociative}

The original references on Koszul duality for operads are \cite{GetzlerJones1994,GinzburgKapranov1994}.
To prepare for computing the Koszul dual of the deformed dendriform operad,
we multiply each column of $X$ by the sign of the corresponding permutation of $a,b,c$ 
and then multiply columns 25--48 (which have the second placement of parentheses) by $-1$;
see \cite{Loday2001}, \cite[Ch.~7]{LodayVallette2012} for details.
We denote the resulting ``signed'' matrix by $X'$.

As before, we partition $X'$ into $18 \times 6$ blocks $X'_1,\dots,X'_8$ corresponding 
to the association types, and rearrange them following the dendriform splitting into 
the $54 \times 48$ block diagonal matrix $Y'$:
  \[
  Y' =
  \left[
  \begin{array}{ccc}
  \left[ \begin{array}{ccc} X'_1 & X'_5 & X'_6 \end{array} \right] & 0 & 0 
  \\
  0 & \left[ \begin{array}{ccc} X'_2 & X'_4 & X'_8 \end{array} \right] & 0
  \\
  0 & 0 & \left[ \begin{array}{ccc} X'_3 & X'_7 \end{array} \right]
  \end{array}
  \right]
  \]
This implies a certain permutation $\xi \in S_{48}$ of the basis monomials of $\mathcal{SO}_2(3)$ from
their original order, of which we must keep track in order to undo it later.

We compute an invertible $48 \times 48$ integer matrix $U$ for which $U(Y')^t$ is the HNF of
$(Y')^t$ over $\mathbb{F}[q]$, and recalling that $Y'$ has rank 18, we define $N$ to be the matrix consisting of 
the bottom 30 rows of $U$.
This is the only point at which the computations become difficult; the entries of $N$ are polynomials in $q$
of degrees 0--8 with distribution $40, 22, 0, 7, 13, 123, 249, 120, 16$.
(We used the Maple package LinearAlgebra for this.)
For example, one of the entries of degree 8 is
  \[
  -\tfrac{1}{32768}
  \big(
  15q^8+550q^7+2386q^6+5230q^5+5184q^4-14894q^3-16850q^2+5018q+13361
  \big).
  \]
We compute the HNF of $N$; its nonzero entries are $\pm 1$, $\pm (q{-}1)$, $\pm (q{+}3)$: a remarkable improvement.
Moreover, the number of nonzero entries in each row is 2, 4 or 6.
We sort the rows as before (rows containing $q$ first, then by increasing number of nonzero entries), 
extract the rows which do not belong to the $S_3$-submodule generated by the previous rows, and
obtain the following relations.

\begin{theorem}
The following five relations define a one-parameter deformation of the nonsymmetric diassociative operad $\mathsf{Dias}$ 
into the category of symmetric operads:
  \begin{align*}
  &
  (q+3) 
  \big[
  ( ( a \succ b ) \prec c ) - ( a \succ ( b \prec c ) )
  \big]
  \\[-1mm]
  &\qquad
  +
  (q-1)
  \big[   
  ( a \succ ( c \prec b ) ) - ( b \succ ( a \prec c ) )  
  + ( b \succ ( c \prec a ) ) - ( c \succ ( a \prec b ) ) 
  \big] = 0,
  \\ 
  &
  (q+3)   
  \big[
  ( ( a \succ b ) \succ c ) - ( a \succ ( b \succ c ) )  
  \big]
  \\[-1mm]
  &\qquad
  +
  (q-1)
  \big[   
  ( a \succ ( c \succ b ) ) - ( b \succ ( a \succ c ) )  
  + ( b \succ ( c \succ a ) ) - ( c \succ ( a \succ b ) )  
  \big] = 0,
  \\ 
  &
  (q+3)   
  \big[
  ( ( a \prec b ) \prec c ) - ( a \prec ( b \succ c ) )  
  \big]
  \\[-1mm]
  &\qquad
  +
  (q-1)
  \big[   
  ( a \prec ( c \succ b ) ) - ( b \prec ( a \succ c ) )  
  + ( b \prec ( c \succ a ) ) - ( c \prec ( a \succ b ) )  
  \big] = 0,
  \\ 
  &
  ( a \prec b ) \succ c - ( a \succ b ) \succ c = 0, 
  \\
  &
  a \prec ( b \prec c ) - a \prec ( b \succ c ) = 0.
  \end{align*}
For $q = 1$ we obtain (4 times) the nonsymmetric diassociative relations.
Note that the associativities deform but the bar relations do not.
\end{theorem}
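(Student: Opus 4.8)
The plan is to run the Koszul-dual recipe at the level of arity-$3$ relation spaces over the polynomial ring $\mathbb{F}[q]$ — the exact analogue of the proof of Lemma \ref{defdias}, with the PID $\mathbb{F}[q]$ in place of $\mathbb{Z}$ — and then to pare the resulting relations down to a minimal $S_3$-generating set. By the construction preceding the previous theorem, the row space $R$ of $Y$ is an $18$-dimensional $S_3$-submodule of $\mathcal{SO}_2(3)$ (the three diagonal blocks of the block-diagonal matrix $Y$ each have rank $6$), and $R$ is precisely the space of arity-$3$ relations of the deformed dendriform operad $\mathsf{Dend}_q$. Following \cite{Loday2001}, \cite[Ch.~7]{LodayVallette2012}, the arity-$3$ relations of the Koszul dual $\mathsf{Dias}_q$ span the annihilator of $R$ under the sign-twisted monomial pairing, and this annihilator is exactly the null space of $Y'$, the matrix obtained from $Y$ by scaling each column by the sign of its permutation of $a,b,c$ and negating the $24$ columns carrying the second placement of parentheses. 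Since $\mathrm{rank}(Y') = 18$, this null space is a free $\mathbb{F}[q]$-module of rank $48 - 18 = 30$. Throughout I would keep track of the block-rearrangement permutation $\xi \in S_{48}$ so that the final relations can be written back in the standard ordered basis of Notation \ref{notationSO23}.

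I would then compute a basis $N$ of this null space by the transpose-HNF trick used repeatedly above: produce $U \in \mathrm{GL}_{48}(\mathbb{F}[q])$ with $U(Y')^{t}$ in Hermite normal form over $\mathbb{F}[q]$ and take $N$ to be its bottom $30$ rows. I expect this to be the main obstacle: over $\mathbb{F}[q]$ the entries of $N$ are polynomials in $q$ of degree up to $8$ with the stated distribution, so this step requires a computer-algebra system and the intermediate expressions are unwieldy. The decisive second point is that replacing $N$ by its Hermite normal form over $\mathbb{F}[q]$ collapses every nonzero entry to one of $\pm 1$, $\pm(q-1)$, $\pm(q+3)$, with only $2$, $4$, or $6$ nonzero entries per row — a fact I would verify directly, and which is at least consistent with $q=1$ being the classical specialization and $q=-3$ being where the coefficient $q+3$ vanishes.

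Next I would translate back: undoing $\xi$ and the column sign-twist turns the $30$ rows of $\mathrm{HNF}(N)$ into $30$ elements of $\mathcal{SO}_2(3)$ expressed in the ordered basis of Notation \ref{notationSO23}. I then sort these rows — those containing $q$ first, and within each kind those with fewer nonzero entries first — and greedily discard any row lying in the $S_3$-submodule generated by the rows already retained (the same linear algebra over $\mathbb{F}[q]$ used to extract generators in Proposition \ref{propositionpolarization}). Exactly five rows survive, and writing them out yields the five displayed relations: three ``associativity'' relations whose terms are scaled by $q+3$ and $q-1$, together with the two $q$-free bar relations $(a\prec b)\succ c - (a\succ b)\succ c$ and $a\prec(b\prec c) - a\prec(b\succ c)$.

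Finally I would verify the specialization. Because the annihilator of $R$ is an $S_3$-submodule of $\mathcal{SO}_2(3)$ depending $\mathbb{F}[q]$-linearly on $q$, the five relations do define a one-parameter family of symmetric operads, and the remark that ``the associativities deform but the bar relations do not'' is simply the observation that $q$ occurs only in relations $1$–$3$. It remains to identify the fibre at $q = 1$: there $q-1 = 0$ and $q+3 = 4$, so the first three relations become $4$ times, respectively, inner associativity $(a\succ b)\prec c = a\succ(b\prec c)$, the equation $(a\succ b)\succ c = a\succ(b\succ c)$, and the equation $(a\prec b)\prec c = a\prec(b\succ c)$, while relations $4$ and $5$ are unchanged. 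Combining relations $3$ and $5$ gives the left pair $(a\prec b)\prec c = a\prec(b\prec c) = a\prec(b\succ c)$, combining relations $2$ and $4$ gives the right pair $(a\prec b)\succ c = (a\succ b)\succ c = a\succ(b\succ c)$, and relation $1$ is inner associativity — so at $q = 1$ we recover, four times on the three associativity relations, exactly the defining relations of $\mathsf{Dias}$ from Lemma \ref{defdias}.
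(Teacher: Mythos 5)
Your proposal follows essentially the same route as the paper: form the sign-twisted, block-rearranged matrix $Y'$, extract a basis of its rank-$30$ null space over $\mathbb{F}[q]$ via the transpose-HNF construction, reduce that basis to HNF to obtain entries $\pm1$, $\pm(q-1)$, $\pm(q+3)$, undo $\xi$, and greedily prune to a minimal $S_3$-generating set; your concluding check that the $q=1$ fibre recovers the relations of Lemma \ref{defdias} matches the paper's claim. This is correct and is the paper's own argument.
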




\begin{thebibliography}{99}

\bibitem{Aguiar2000}
\textsc{Marcelo Aguiar}: 
Pre-Poisson algebras.
\emph{Lett. Math. Phys}. 
54 (2000) no.~4, 263--277.
MR1846958 (2002k:17041) 

\bibitem{Bremner2012}
\textsc{Murray R. Bremner}:
\emph{Lattice Basis Reduction: An Introduction to the LLL Algorithm and its Applications}.
CRC Press, Boca Raton, FL, 2012. 
xviii+316 pp. ISBN: 978-1-4398-0702-6. 
MR2829731 (2012h:06001) 

\bibitem{BremnerDotsenko2016}
\textsc{Murray R. Bremner, Vladimir Dotsenko}:
\emph{Algebraic Operads: An Algorithmic Companion}.
CRC Press, Boca Raton, FL, 2016 (forthcoming). 
365 pp. ISBN 9781482248562.

\bibitem{BremnerPeresi2009}
\textsc{Murray R. Bremner, Luiz A. Peresi}:
An application of lattice basis reduction to polynomial identities for algebraic structures.
\emph{Linear Algebra Appl}. 
430 (2009) no. 2-3, 642--659. 
MR2469318 (2009i:17003) 

\bibitem{GetzlerJones1994}
\textsc{Ezra Getzler, J. D. S. Jones}:
Operads, homotopy algebra and iterated integrals for double loop spaces.
\url{http://arxiv.org/abs/hep-th/9403055}
(submitted on 8 March 1994).

\bibitem{GinzburgKapranov1994}
\textsc{Victor Ginzburg, Mikhail Kapranov}:
Koszul duality for operads. 
\emph{Duke Math. J}. 
76 (1994) no.~1, 203--272. 
MR1301191 (96a:18004) 

\bibitem{LivernetLoday1998}
\textsc{Muriel Livernet, Jean-Louis Loday}:
The Poisson operad as a limit of associative operads.
Unpublished preprint, March 1998.

\bibitem{Loday1995}
\textsc{Jean-Louis Loday}:
Alg\`ebres ayant deux op\'erations associatives (dig\`ebres). 
\emph{C. R. Acad. Sci. Paris S\'er. I Math}. 
321 (1995) no.~2, 141--146. 
MR1345436 (96f:16013) 

\bibitem{Loday2001}
\textsc{Jean-Louis Loday}:
Dialgebras. 
\emph{Dialgebras and Related Operads}, pages 7--66.
Lecture Notes in Mathematics, 1763.
Springer, Berlin, 2001. 
MR1860994 (2002i:17004) 

\bibitem{LodayRonco1998}
\textsc{Jean-Louis Loday, Mar\'ia O. Ronco}:
Hopf algebra of the planar binary trees.
\emph{Adv. Math}. 
139 (1998) no.~2, 293--309. 
MR1654173 (99m:16063) 

\bibitem{LodayVallette2012}
\textsc{Jean-Louis Loday, Bruno Vallette}:
\emph{Algebraic Operads}.
Grundlehren der Mathematischen Wissenschaften, 346.
xxiv+634 pp. ISBN: 978-3-642-30361-6.
Springer, Heidelberg, 2012.
MR2954392

\bibitem{MarklRemm2006}
\textsc{Martin Markl, Elisabeth Remm}:
Algebras with one operation including Poisson and other Lie-admissible algebras. 
\emph{J. Algebra} 
299 (2006) no.~1, 171--189. 
MR2225770 (2007b:17053)

\end{thebibliography}
\end{document}